\date{}
\title{One-ended spanning trees in amenable unimodular graphs}
\author{\'Ad\'am Tim\'ar\\ 
\small Alfr\'ed R\'enyi Institute of Mathematics\\[-0.8ex]
\small Re\'altanoda u. 13-15.,\\
\small H-1053 Budapest\\
\small \texttt{madaramit[at]gmail.com}\\}
\renewcommand\footnotemark{}
\newif\ifhyper\IfFileExists{hyperref.sty}{\hypertrue}{\hyperfalse}
\ifhyper\usepackage{hyperref}\fi
\newif\ifdraft
\theoremstyle{definition}
\newtheorem{theorem}{Theorem}
\newtheorem{corollary}[theorem]{Corollary}
\newtheorem{lemma}[theorem]{Lemma}
\newcommand{\Z}{\mathbb{Z}}
\def \eps {\epsilon}
\def \P {{\Bbb P}}
\def \E {{\Bbb E}}
\def \_reg {\rightarrow_{\bf reg}}
\def\maxdeg/{\Delta}
\def\dist{{\rm dist}}
\def\dist{{\rm dist}}
\def \eps {\epsilon}
\def \P {{\bf P}}
\def \E {{\bf E}}
\def \_reg {\rightarrow_{\bf reg}}
\def\maxdeg/{\Delta}
\def\dist{{\rm dist}}
\def\cals{{\cal S}}
\def\cale{{\cal E}}
\def\Pcal{{\cal P}}
\def\dist{{\rm dist}}
\def\cali{{\cal I}}
\begin{document}
\maketitle
\let\thefootnote\relax\footnotetext{\footnotesize{This research was supported by the Hungarian National Research, Development and Innovation Office, NKFIH grant K109684, and by grant LP 2016-5 of the Hungarian Academy of Sciences.}}

\bigskip

\begin{abstract}
We prove that every amenable one-ended Cayley graph has an invariant spanning tree of one end. More generally, for any 1-ended amenable unimodular random graph we construct a factor of iid percolation (jointly unimodular subgraph) that is almost surely a spanning tree of one end. In \cite{BLPS} and \cite{AL} similar claims were proved, but the resulting spanning tree had 1 or 2 ends, and one had no control of which of these two options would be the case. 
\end{abstract}

\vspace{0.5in}

Every unimodular amenable graph $G$ allows a percolation (random subgraph whose distribution is jointly unimodular with $G$) that is almost surely a spanning tree with 1 or 2 ends; see Theorem 8.9 in \cite{AL}. We strengthen this by showing that if $G$ is amenable and 1-ended then it has a 1-ended spanning tree percolation, and this can be constructed as a factor of iid (fiid). This later condition for fiid construction was already implicit in \cite{AL} (and \cite{BLPS}), so the real novelty is that we do not have to allow 2-ended trees. Our original motivation was \cite{T2}, where it was crucial that the spanning forest is an fiid and has 1 end. 
See Section 8 of \cite{AL} for the generalized definition of amenability to unimodular random graphs and for several equivalent characterizations.  

\begin{theorem}\label{oneended}
Let $G$ be an ergodic amenable unimodular random graph that has one end almost surely. Then there is a factor of iid spanning tree of $G$ that has one end almost surely.
\end{theorem}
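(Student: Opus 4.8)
The plan is to build the one-ended spanning tree as an increasing limit of an exhaustion by finite subforests, using amenability to supply invariant Følner-type sets as a factor of iid. First I would recall that, since $G$ is amenable and unimodular, there is a factor of iid sequence of finite vertex sets $(W_n)_{n\ge 1}$ with $W_n \subseteq W_{n+1}$, $\bigcup_n W_n = V(G)$, and whose edge boundaries are asymptotically negligible in the Følner sense; concretely one extracts these from the characterization of amenability in Section 8 of \cite{AL} together with a fiid tiling/partition into large finite pieces of controlled isoperimetry (this is exactly the type of object already used implicitly in \cite{AL} and \cite{BLPS} to get a $1$- or $2$-ended tree). The point where we must be more careful than those references is the choice of \emph{how} we connect things up across the boundaries, because that is what decides whether the limiting tree has one end or two.

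The key steps, in order, would be: (1) Using the fiid partition into finite cells, first choose inside each cell a spanning tree (any fiid rule works, e.g.\ a minimal weight spanning tree with iid edge labels), giving a spanning forest $F_0$ each of whose components is finite. (2) Iterate: given a forest $F_k$ all of whose components are finite, merge components across cell boundaries of level $k+1$ by adding, for each level-$(k+1)$ cell, a carefully chosen set of edges that joins the $F_k$-components meeting that cell into a single $F_{k+1}$-component, while keeping $F_{k+1}$ a forest. Since at each stage all components are still finite, the limit $F=\bigcup_k F_k$ is a spanning forest with the property that every vertex has, eventually, all of $V(G)$ in its component — so $F$ is a spanning \emph{tree}. (3) The decisive step: arrange the merging in step (2) so that no infinite ray is created "prematurely", i.e.\ so that the tree has at most one end; combined with the fact that a spanning tree of an infinite graph has at least one end, this gives exactly one end. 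The mechanism for this is to orient each finite component of $F_k$ toward a distinguished root, and when merging we only ever attach the root of one component to a vertex of another, so that the resulting tree at each finite stage is "rooted" with a unique infinite branch forming as $k\to\infty$; more precisely, one shows that for every vertex $v$ the forward ray from $v$ (following the orientation) is the \emph{unique} way to escape to infinity in $F$, which forces one end.

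The main obstacle is step (3): one must make the local merging rule simultaneously (a) a factor of iid — so it cannot use a global root or any non-equivariant choice, (b) consistent across levels — the level-$(k+1)$ merging must respect the orientation produced at level $k$, and (c) genuinely one-ended — we must rule out the scenario, which is exactly what happens in the $2$-ended constructions of \cite{AL,BLPS}, where the "trunk" of the tree is a bi-infinite path. I expect the right way to handle (c) is to keep an invariant rule for selecting, within each level-$k$ component, a single boundary vertex to act as its "exit", and to make the exit of a merged component equal to the exit of the particular sub-component that was designated as dominant (say the one of largest cardinality, ties broken by iid labels); then one verifies by a mass-transport argument that with probability one no vertex is the exit-ancestor of an infinite set at any fixed finite level, so the bi-infinite-path scenario has probability zero while connectivity to infinity is retained. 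Amenability enters precisely in guaranteeing that the Følner cells can be chosen as a fiid so that this designation stabilizes; the unimodularity/mass-transport principle enters in the final verification that the number of ends is one rather than two. A secondary, more technical obstacle is ensuring the whole construction is a factor of \emph{iid} and not merely an invariant (fiid) subgraph — but, as noted in the excerpt, the fiid strengthening was already essentially available in \cite{AL}, so this should follow by running the cell-partition and all tie-breaking off a single iid labelling.
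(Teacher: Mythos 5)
There is a genuine gap, and it sits exactly where you flagged it: step (3). The mechanism you propose for forcing one end --- orient each finite component toward an ``exit'' vertex, attach only exits when merging, designate the exit of the merged component to be that of the largest sub-component --- never uses the hypothesis that $G$ has one end. A quick sanity check shows the argument cannot be completed as described: the graph $\mathbb{Z}$ is amenable, unimodular and ergodic, your F\o lner cells, spanning forests, exits and mass-transport verifications all make sense on it, yet its only spanning tree is $\mathbb{Z}$ itself, which has two ends. So whatever local exit rule you choose, the ``bi-infinite trunk'' scenario is not an event of probability zero that a clever tie-breaking can kill; it is forced by the geometry of $G$ unless one-endedness is injected somewhere. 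Relatedly, the verification you sketch (``no vertex is the exit-ancestor of an infinite set at any fixed finite level'') only says that components stay finite at each stage --- which you need anyway for the limit to be a spanning tree --- and says nothing about how many ends the limit has; the number of ends is a tail property of the limit, not visible at any finite level.

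The missing idea, which is the actual content of the paper's proof, is the construction of a fiid sequence of \emph{connected} subgraphs $H_n$ with $\P(o\in H_n)\to 0$. Given such a sequence, one can run essentially your exhaustion scheme but force every component of the stage-$n$ forest to attach to the future through a \emph{single} vertex lying in the sparse connected set $H_{k(n+1)}$; since a.s.\ every vertex eventually leaves the $H_n$, every vertex is separated from infinity by one cut vertex, which rules out a bi-infinite path (this is the paper's Lemma \ref{connectedsub}). One-endedness of $G$ is used precisely to produce the $H_n$: starting from the $1$- or $2$-ended fiid tree $T_0$ of \cite{AL}, one takes its bi-infinite trunk $B$ and shows (Lemma \ref{distance}) that because $G$ is one-ended there are infinitely many ``chords'' of $G$ over every point of $B$, whence $\E(\dist_G(x_{-n},x_n))/2n\to 0$; a sparse Bernoulli process on $B$ connected up by near-geodesics then gives the $K_n$, hence the $H_n$. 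Your outline would need to be supplemented by an argument of this kind (or some other quantitative use of one-endedness); without it, step (3) is not a technical obstacle but an unfilled hole through which the two-ended counterexamples pass.
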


In \cite{BLPS}, Benjamini, Lyons, Peres and Schramm proved that a quasi-transitive unimodular graph is amenable if and only if it has an invariant spanning tree with at most 2 ends (Theorem 5.3). Note that a quasi-transitive amenable graph can only have 1 or 2 ends; and also that if it has 2 ends then all its invariant spanning trees are 2-ended. Our result can hence be thought of as a strengthening of the characterization in \cite{BLPS}:
\begin{corollary}
A quasi-transitive unimodular graph is amenable and has 1 end if and only if it has an invariant spanning tree with 1 end.
\end{corollary}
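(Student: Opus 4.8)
The plan is to read this off from Theorem~\ref{oneended}, from Theorem~5.3 of \cite{BLPS} (``a quasi-transitive unimodular graph is amenable if and only if it carries an invariant spanning tree with at most two ends''), and from the two elementary remarks recalled above (a quasi-transitive amenable graph has only one or two ends, and a two-ended quasi-transitive graph has all of its invariant spanning trees two-ended). There is essentially no new content here: the work is entirely contained in Theorem~\ref{oneended}.

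For the forward implication, suppose $G$ is quasi-transitive, unimodular, amenable and one-ended. I would first observe that a deterministic connected quasi-transitive graph is ergodic when viewed as a unimodular random rooted graph --- the unimodular probability measure supported on it is unique, hence an extreme point of the unimodular measures --- so Theorem~\ref{oneended} applies to it (it is also one-ended and amenable by hypothesis) and yields a factor of iid spanning tree of $G$ that is one-ended almost surely. A factor of iid subgraph is $\Aut(G)$-invariant in law, so this is the desired invariant one-ended spanning tree.

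For the converse, suppose $G$ is quasi-transitive and unimodular and has an invariant spanning tree $T$ with one end. Then $G$ is infinite and $T$ has at most two ends, so Theorem~5.3 of \cite{BLPS} gives that $G$ is amenable. Being infinite, quasi-transitive and amenable, $G$ then has either one or two ends; if it had two ends, all of its invariant spanning trees --- in particular $T$ --- would be two-ended, contradicting the hypothesis on $T$. Hence $G$ has exactly one end. The only step that needs a word of justification is the reduction to the ergodic setting in the forward direction, and that is immediate from the uniqueness of the unimodular measure on a connected quasi-transitive graph; apart from that, the corollary is a direct combination of Theorem~\ref{oneended} with the cited results, so there is no real obstacle here beyond Theorem~\ref{oneended} itself.
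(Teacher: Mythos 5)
Your proposal is correct and follows essentially the same route the paper intends: the forward direction is Theorem~\ref{oneended} (a factor of iid is in particular invariant), and the converse combines Theorem~5.3 of \cite{BLPS} with the two observations the paper states just before the corollary (an amenable quasi-transitive graph has one or two ends, and in the two-ended case every invariant spanning tree is two-ended). Your extra remark on why a fixed quasi-transitive graph is ergodic as a unimodular random graph is a reasonable way to justify applying the theorem and does not change the argument.
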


We assume that the reader is familiar with the Mass Transport Principle (MTP). See e.g. \cite{AL} for the formulation, which can be taken as the defining property of unimodular graphs. A subgraph $H$ of the rooted graph $(G,o)$ is a factor of iid (fiid), if
it can be constructed as a Borel measurable function from iid Lebesgue($[0,1]$) labellings of $V(G)$ that is equivariant with rooted isomorphisms; in other words, if one can tell the edges of $H$ incident to $o$ up to arbitrary precision from the labels in a large enough neighborhood of $o$.
See e.g. \cite{T2} for a more formal definition.
Along the proofs we will make some local choices, such as choosing a subgraph of a certain property out of finitely many possibilities, otherwise arbitrarily. To make the final result a fiid, these choices have to be made using some previously fixed local rule using the iid labels. We will skip the details of such choices, which are straightforward.


\begin{lemma}\label{connectedsub}
Let $G$ be an ergodic amenable unimodular random graph. Suppose that there exists a factor of iid sequence $(H_n)$ of connected subgraphs of $G$ such that $\P (o\in H_n)\to 0$. Then $G$ has a 1-ended factor of iid spanning tree. 
\end{lemma}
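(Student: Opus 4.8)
The plan is to use the sequence $(H_n)$ as a source of large connected ``pieces'' that thin out, and to stitch such pieces together across scales into a single one-ended spanning tree. First I would fix a rapidly decreasing subsequence so that $\sum_n \P(o\in H_n)<\infty$; by Borel--Cantelli (applied via the MTP to control how often a given vertex is covered) one gets that a.s.\ every vertex lies in only finitely many of the $H_n$, and in fact for each vertex there is a last index $n$ with $o\in H_n$. The idea is to think of the $H_n$ as a nested-in-spirit (though not literally nested) family of coarser and coarser partitions of pieces of $G$: at ``level $n$'' the graph is broken into the connected components of $H_n$ together with the singletons not covered by $H_n$.

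The construction itself would go roughly as in the standard amenable spanning-tree arguments (Theorem 8.9 of \cite{AL}, Theorem 5.3 of \cite{BLPS}), but organised so that one end is forced. At each level $n$, inside every connected component $C$ of $H_n$ I would choose (by a local fiid rule) a spanning tree $T_C$ of the subgraph of $G$ induced on $C$ — since $C$ is connected this is possible. Then I contract each component of $H_n$ and each uncovered vertex to a point, obtaining a coarser unimodular graph $G_n$, and repeat: the components of $H_{n+1}$ (viewed in $G_n$ after an appropriate consistency adjustment) give the next level of clustering. Across levels, between a level-$n$ cluster and the level-$(n+1)$ cluster containing it one adds a single connecting edge of $G$, chosen locally. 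Taking the union of all the $T_C$'s over all levels together with the connecting edges yields a subgraph $T$ of $G$. Connectivity and acyclicity of $T$ follow because at each level we only add a spanning tree within each new cluster plus exactly one edge linking each old cluster to its new parent, so no cycles are created and everything eventually gets joined; this is the routine book-keeping I would not grind through.

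The reason $T$ has exactly one end is the key point, and where I would be most careful. Since $\P(o\in H_n)\to 0$, for a.e.\ vertex $v$ there is a finite level after which $v$ is ``on its own'' — i.e.\ the cluster of $v$ stabilises to contain any fixed finite set only from some level on, and the cluster sizes grow to fill the whole graph. Equivalently, the dual ``cluster tree'' (clusters as nodes, parent = containing cluster at the next level) is a locally finite tree in which every vertex of $G$ is eventually absorbed, so it has a single end ``at infinity''; pulling back through the within-cluster spanning trees and the single connecting edges, any two rays in $T$ must eventually travel through the same ascending chain of clusters and hence can be joined by a path staying outside any prescribed finite set, so $T$ has at most one end. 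It obviously has at least one end since $G$ is infinite and $T$ spans it. The main obstacle is precisely to arrange the levels so that the clusters are genuinely \emph{nested} (or can be made nested after a bounded local modification) and exhaust $G$ — the raw hypothesis only gives $\P(o\in H_n)\to 0$, not nestedness — so some care is needed to pass to a subsequence and to merge overlapping clusters coherently while keeping everything a factor of iid and unimodular; I would handle this merging by a union-find type local rule and verify unimodularity by the MTP at each step.
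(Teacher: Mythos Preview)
Your proposal rests on a misreading of the hypothesis. Each $H_n$ is a \emph{single connected} subgraph of $G$ with $\P(o\in H_n)\to 0$: it is one infinite ``thin spine'' whose density goes to zero, not a union of growing finite clusters. So when you write ``inside every connected component $C$ of $H_n$ choose a spanning tree $T_C$'', there is only one such $C$, it is infinite, and producing a fiid spanning tree of it is essentially the problem you are trying to solve --- this step is circular. Likewise your ``cluster tree'' picture (clusters growing and eventually absorbing every vertex) is backwards: Borel--Cantelli does give that every vertex eventually leaves the $H_n$, but that means the $H_n$ are \emph{shrinking}, so at each level your partition is ``one infinite class $H_n$ plus ever more singletons'', which does not coarsen and cannot exhaust $G$ in the way you want.

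The paper's argument exploits exactly the opposite picture. After passing to a subsequence and a short modification one arranges $H_{n+1}\subset H_n$, so the $H_n$ form a nested family of shrinking infinite connected cores. One then builds the tree outward, annulus by annulus: in each layer $H_{k(n)}\setminus H_{k(n+1)}$ one grows a finite-component forest rooted at the inner boundary $\partial^{\rm up}H_{k(n)}$ (each vertex picks an edge toward the core), and each existing finite component is attached to the next core by a single edge. One-endedness is then automatic: every vertex $v$ sits in some annulus and is separated from infinity by the single attachment vertex $v(C)$ of its component. The genuinely delicate point --- which your sketch also does not address --- is ensuring that the increasing union of these finite-component forests is \emph{connected}; for this the paper invokes an auxiliary exhausting sequence of finite connected partitions $(\mathcal{P}_m)$ (from the amenable spanning-tree construction in \cite{AL}) and at each stage merges forest components lying in a common $\mathcal{P}_{m(n)}$-class, choosing $m(n)$ large enough that $\P(x\leftrightarrow_{F_n}o)\to 1$. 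Without something playing the role of the $(\mathcal{P}_m)$, there is no mechanism in your outline forcing the final forest to be a single tree.
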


\begin{proof}
By switching to a subsequence if necessary, we may assume that $\P (o\in H_n)< 2^{-n}$. 
We may also assume that $H_{n+1}\subset H_n$, as we explain next. First note that for every $\eps>0$ one can modify every $H_n$ to get an $H_n'$, in such a way that $H_n\subset H_n'$, $H_n'$ is invariant, connected, $\P (o\in H_n')< 2^{-n}(1+\eps)$, and $H'_n\cap H'_{n+1}\not=\emptyset$. Namely, suppose that the distance between $H_n$ and $H_{n+1}$ is $k$. If $k=0$, choose $H_n'=H_n$. Otherwise, for every point of $H_n$ at distance $k$ from $H_{n+1}$, fix a path of length $k$ between this point and $H_{n+1}$, and select it with probability $\eps/(k+1)$. Add all the selected paths to $H_n$ to obtain $H_n'$. There must be infinitely many points in $H_n$ at distance $k$ from $H_{n+1}$ by the MTP, so $H_n'$ in fact intersects $H_{n+1}\subset H_{n+1}'$ almost surely.
Therefore $H_n'':=\cup_{i=n}^\infty H_n'$ is connected, $H_{n+1}''\subset H_n''$, and $\P (o\in H_n'')< 2^{-n+1}(1+\eps)$, as we wanted. So we will assume that $H_{n+1}\subset H_n$.

Let $(\Pcal_m)$ be a sequence of partitions of $V(G)$ such that
every partition class induces a connected subgraph of $G$, $\Pcal_n$ is coarser than $\Pcal_{n-1}$, and any two points of $V(G)$ are in the same class of $\Pcal_n$ for all but finitely many $n$.
Such a sequence exists, see e.g. Theorem 5.3 in \cite{AL}, where such a sequence is used for unimodular graphs to construct a spanning tree of at most two ends. This construction is also fiid, which is implicit in the proof. 

As usual, let $o$ be the root of our unimodular graph. Denote by $x$ a uniformly chosen neighbor of $o$. If $H$ is a subgraph of $G$, $v$ and $w$ two vertices, we let $v\leftrightarrow_H w$ stand for the event that $v$ and $w$ are in the same component of $H$. For an arbitrary forest ${\cal F}$ and vertex $v$, let ${\cal F}(v)$ be the component of $v$ in ${\cal F}$.

We will define spanning forests $F_n$ of $G$ that all have only finite components, $(G,F_n)$ is jointly unimodular, and their limit will be the tree in the claim. Let $H_0:=G$ and $F_0:=\emptyset$.

Let $k(n)$ be a strictly increasing sequence of positive integers, to be defined later, with $k(0)=0$. Suppose recursively that $F_n$ has been defined, all its edges are in $G\setminus H_{k(n)}$, and every component of it is adjacent to $H_{k(n+1)}$. Suppose further that 
\begin{equation}
\P(x\leftrightarrow_{F_n} o)\geq 1-2^{-n}. 
\label{eq:recursive}
\end{equation}
The recursive assumptions trivially hold for $n=0$.

Figure \ref{reszletes} illustrates the steps of the construction that are explained next.

\begin{figure} 
\centering
\begin{minipage}{\columnwidth}
\centering
\subfloat[][attaching $F_n$ to $H_{k(n)}$]{\includegraphics[scale=0.45]{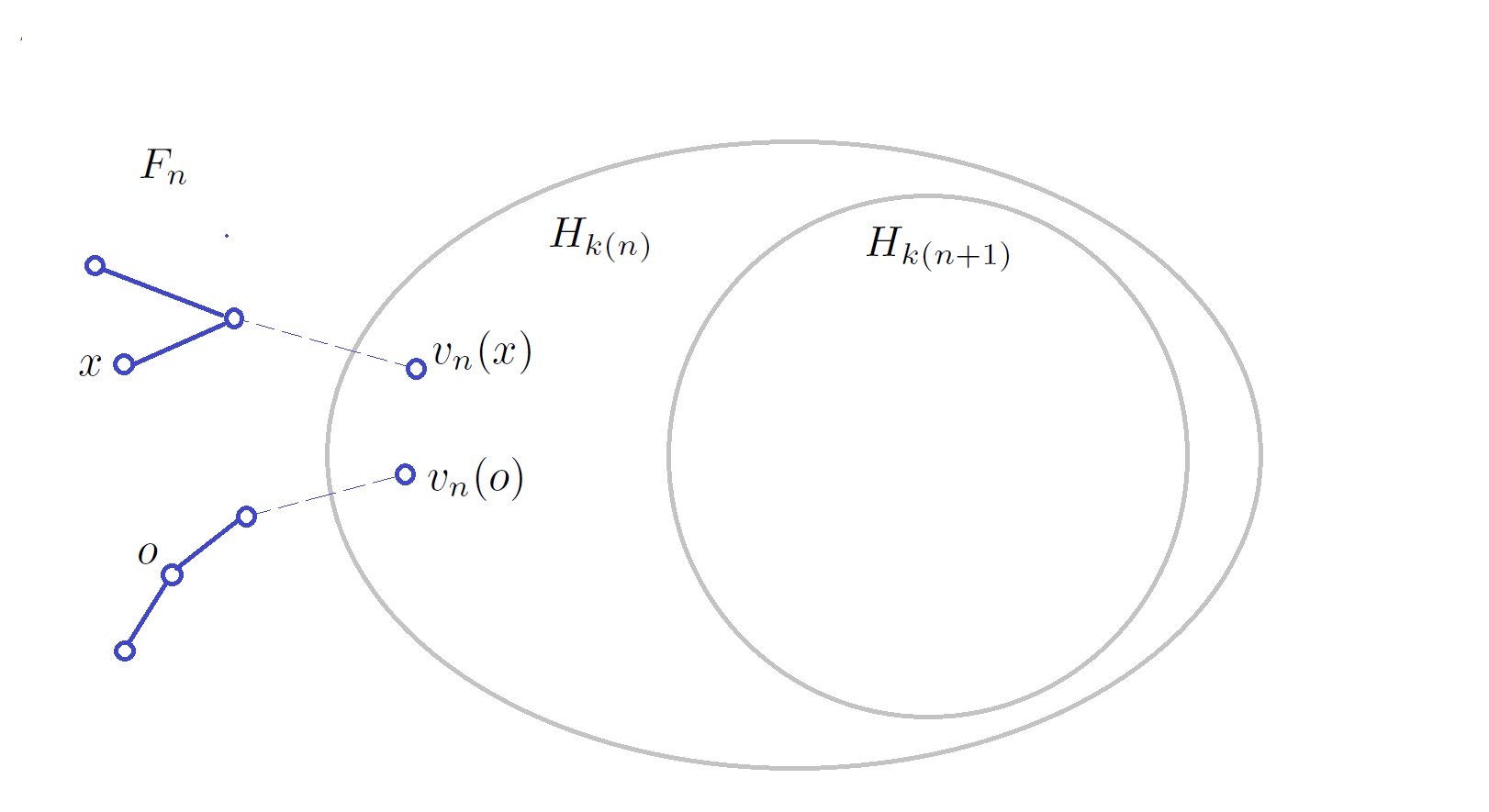} \qquad \includegraphics[scale=0.45]{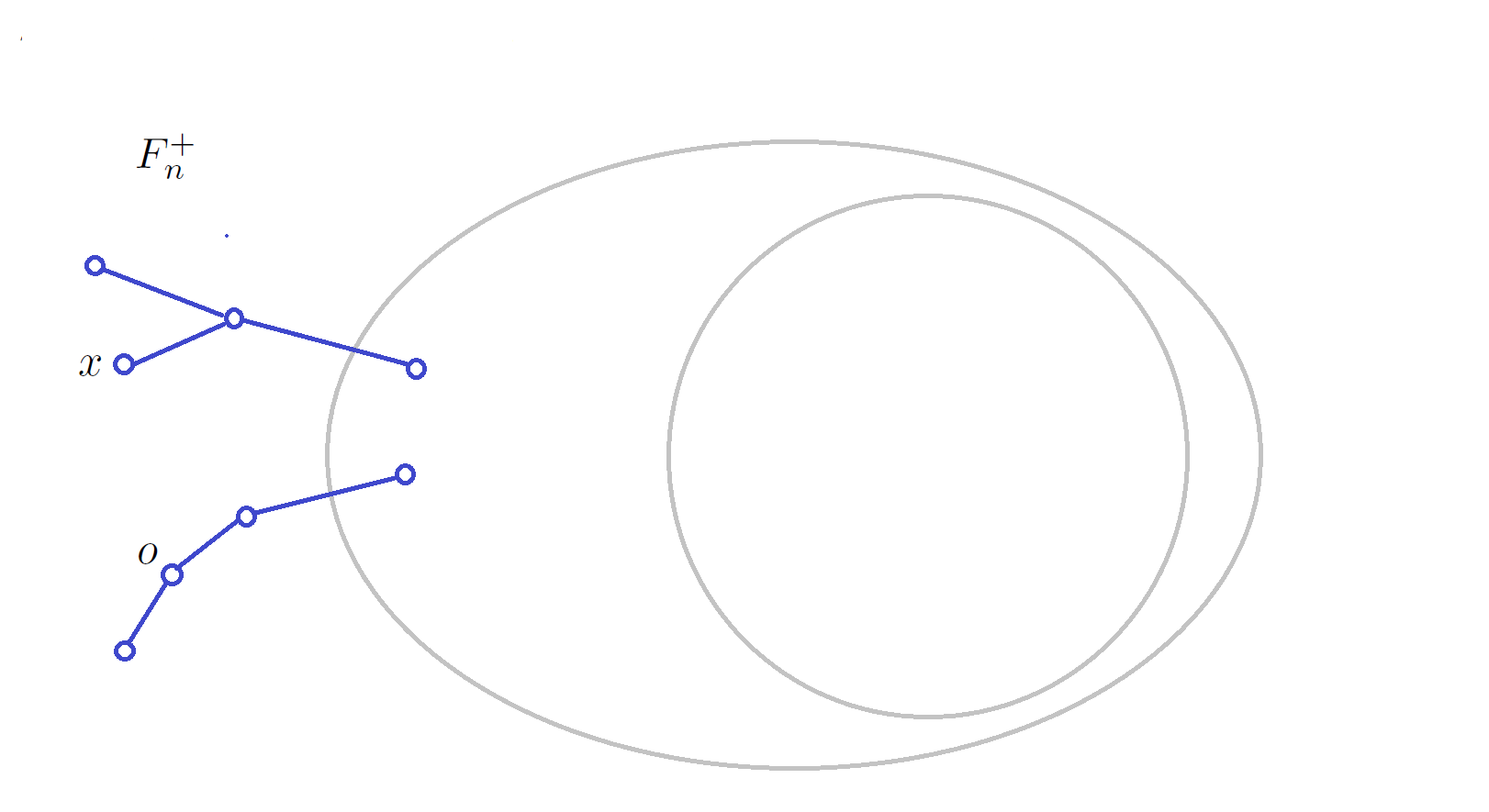}}
\end{minipage}
\begin{minipage}{\columnwidth}
\centering
\subfloat[][constructing the forest within $H_{k(n)}\setminus H_{k(n+1)}$]{\includegraphics[scale=0.45]{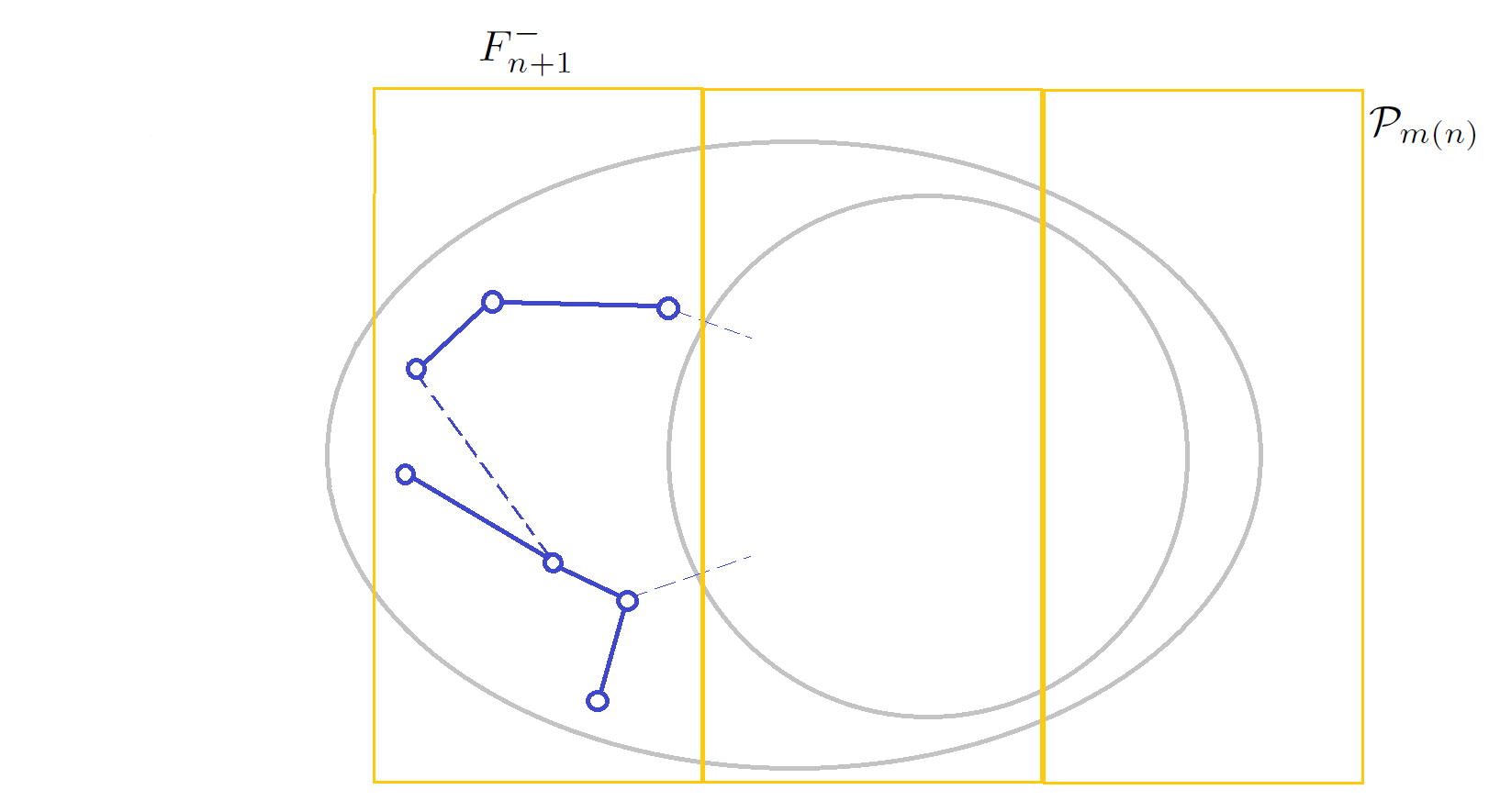} \qquad \includegraphics[scale=0.45]{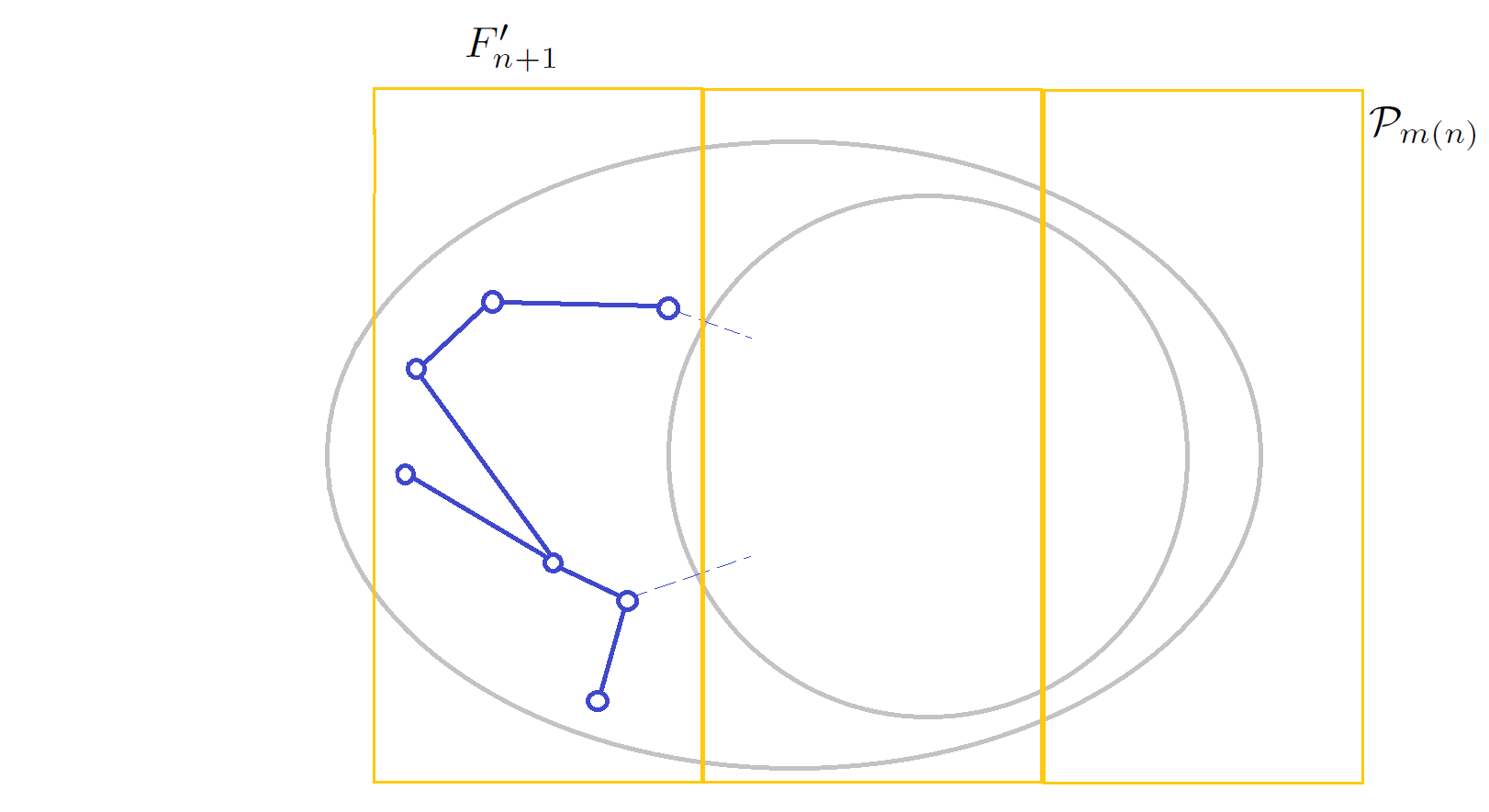}}
\end{minipage}
\begin{minipage}{\columnwidth}
\centering
\subfloat[][the union of the two provides $F_{n+1}$]{\includegraphics[scale=0.45]{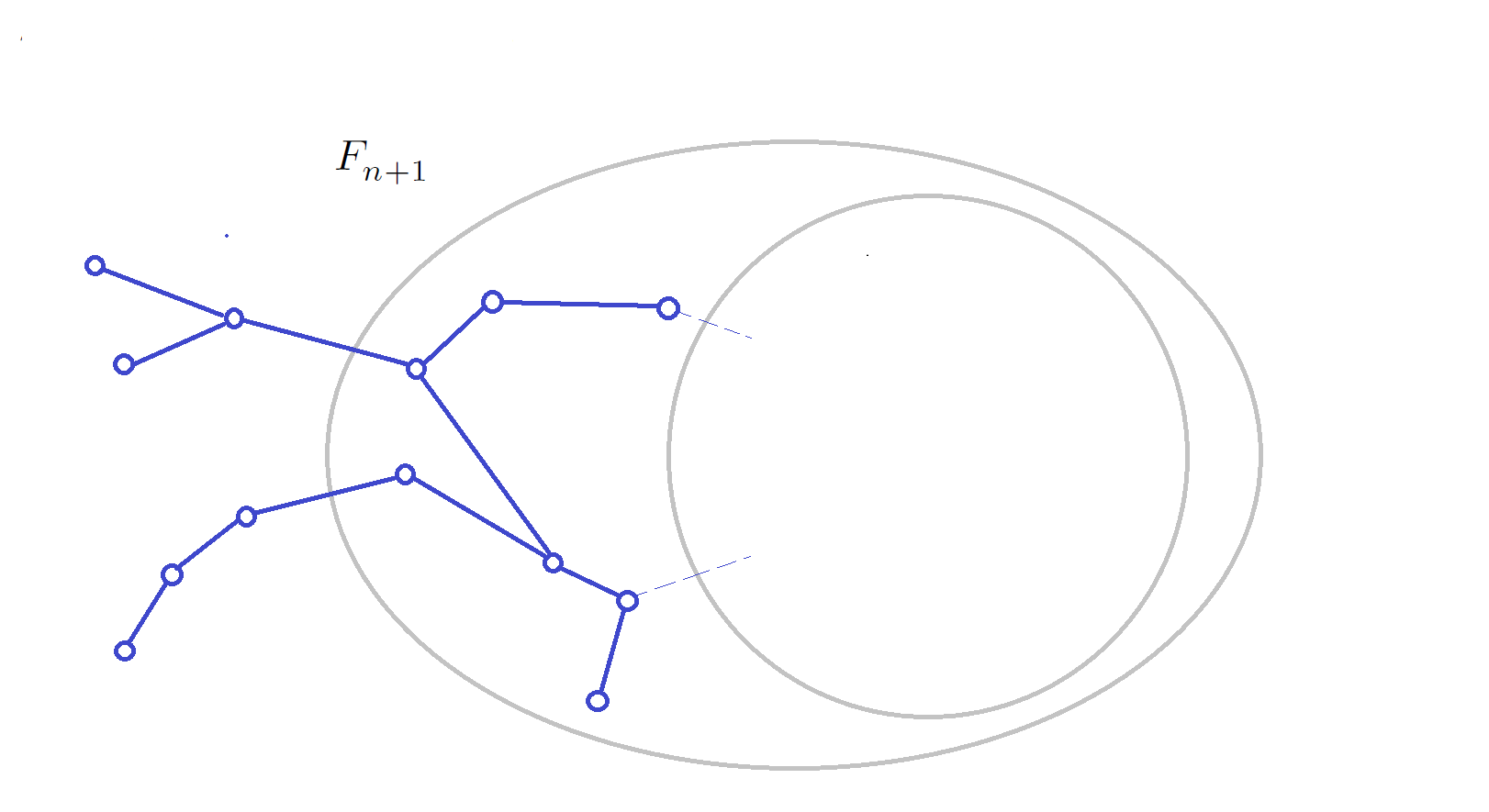}}
\end{minipage}
\caption{The construction of $F_{n+1}$ from $F_n$. Dashed lines are in $E(G)$, but not in the graph at display.}
\label{reszletes}
\end{figure}

For every component $C$ of $F_n$, let $v(C)$ be a randomly chosen vertex of $H_{k(n+1)}$ that is adjacent to $C$. Define $F_n^+$ as the union of $F_n$ and all the edges of the form $\{v(C), u\}$, where $u\in C$ and $C$ is a
component of $F_n$.
Let $v_n(x)$ (respectively $v_n(o)$) be equal to $v(C_x)$ (resp. $v(C_o)$), where $C_x$ is the component of $x$ (resp $o$) in $F_n$.

\def\up{{\partial^{{\rm up}}H_{k(n)}}}

Let $\up$ be the set of vertices in $H_{k(n)}\setminus H_{k(n+1)}$ that are adjacent to $H_{k(n+1)}$. 
Grow a forest within 
$H_{k(n)}\setminus H_{k(n+1)}$ starting from $\up$ iteratively as follows. As $i=0,1,\ldots$, consider the set $U_i$ of vertices at distance $i$ from $\up$ (so $U_0=\up$), and for $i\geq 1$ pick a randomly chosen edge between each vertex in $U_i$ and some vertex in $U_{i-1}$. As $i\to\infty$, we end up with a forest $F_{n+1}^-$ in $H_{k(n)}\setminus H_{k(n+1)}$, which has the property that each of its components contains a unique point of $\up$ (by the connectedness of $H_{k(n)}$), and consequently, each component if finite (by the MTP). 

Let $A(n,m)$ be the event that there is a path with consecutive vertices $ p_1,\ldots, p_\ell$, between $v_n(x)$ and $v_n(o)$ ($p_1=v_n(x), p_\ell=v_n(o)$), with the
$F_{n+1}^-(p_i)$ all fully contained in the same class of $\Pcal_m$.
By definition of $\Pcal_m$, we have that $\lim_{m\to\infty}\P(v_n(x)\leftrightarrow_{K_{k(n)}\setminus K_{k(n+1)}}v_n(o); A(n,m))
=\P(v_n(x)\leftrightarrow_{K_{k(n)}\setminus K_{k(n+1)}}v_n(o))$. Choose $m(n)$ large enough so that 
\begin{equation}
\P(v_n(x)\leftrightarrow_{K_{k(n)}\setminus K_{k(n+1)}}v_n(o); A(n,m))\geq1-2^{-n+1}.
\label{eq:szimes}
\end{equation}
Such a choice is possible by the recursive assumption \eqref{eq:recursive}.
For each class $K$ of $\Pcal_{m(n)}$ consider the set of components of $F_{n+1}^-$ that lie entirely in $K$, and add a maximal number of edges to them (following some otherwise arbitrary rule) so that the result is still cycle-free. Call the resulting forest $F_{n+1}'$ (so $F_{n+1}'$ is $F_{n+1}^-$ with all these added edges). Then, 
by \eqref{eq:szimes}, 
$\P(x\leftrightarrow_{F_{n+1}'\cup F_n^+}o)=
\P(v_n(x)\leftrightarrow_{F_{n+1}'}v_n(o))\geq 1-2^{-n+1}.$ Finally, define $F_{n+1}$ as $F_{n+1}'\cup F_n^+$. By construction, the recursive assumptions are satisfied by $F_{n+1}$.

Let $F$ be the limit of the increasing sequence $F_n$. It is clearly a forest, and by \eqref{eq:recursive}, $F$ is a spanning tree. To see that $F$ has one end, pick an arbitrary vertex $v$, and let $n\in\{0,1,\ldots\}$ be such that $v\in H_{k(n)}\setminus H_{k(n+1)}$. If $C$ is the component of $v$ in $F_n$, then $v$ is in a finite component of $F\setminus \{v(C)\}$, hence $v$ is separated from infinity by one point, as we wanted.
\end{proof}
\qed

In what follows we are going to construct a sequence of fiid connected subgraphs of $H_n$ with marginals tending to 0, as in Lemma \ref{connectedsub}. This will then establish Theorem \ref{oneended}.

From now on, {\it intervals always mean discrete intervals}, e.g. $[a,b]$ with $a,b\in\Z$ is the set $\{a,a+1,\ldots, b\}$. An interval may only consist of 1 point. Given a set of intervals, it will automatically define an {\it interval graph}, as the graph whose vertices are the given intervals, and two are adjacent if they intersect. By a slight sloppiness, we will refer to the graph induced by a set $\cali$ of intervals by the same notation $\cali$. 

\def\endpoints{{\cal V}}

\begin{lemma}\label{todelta}
Let $a,b\in\Z$, and let $\cali$ be a connected interval graph of intervals in $[a,b]$. Suppose that both $a$ and $b$ are contained in some interval in $\cali$. Then there is some $\cali'\subset\cali$ such that the graph induced by $\cali'$ is connected, and every integer of $[a,b]$ is contained in exactly 1 or 2 elements of $\cali'$. 

Denote the minimal length of an interval in $\cali$ by $\Delta$. Fix $\delta\leq\lfloor\Delta/2\rfloor$ to be a positive integer.
Define $O=\delta\Z$. Then there is a map $\iota$ from the set of endpoints $\endpoints(\cali)$ of $\cali$ to $O$ that has the following properties:
\begin{enumerate}
\item $|x-\iota (x)|\leq 2\delta$ for every $x\in\endpoints(\cali)$.
\item If $x\leq y$, $x,y\in \endpoints(\cali)$, then $\iota (x)\leq \iota(y)$. In particular, the interval graph defined by $\cali '':=\{[\iota (a),\iota (b)]:\, [a,b]\in\cali'
\}$ is such that $\iota$ maps adjacent (intersecting) intervals in $\cali'$ to adjacent intervals in $\cali''$.
\item Every point of $[a,b]$ is contained in at most two elements of $\cali''$.
\end{enumerate}
\end{lemma}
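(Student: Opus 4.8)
First I would establish the reduction to a sparse subfamily $\cali'$. Order the intervals in $\cali$ and greedily build a connected "chain" covering $[a,b]$: start with an interval containing $a$, and at each step, among all intervals in $\cali$ that intersect the current rightmost interval and extend furthest to the right, pick one; stop once $b$ is covered, which happens because $\cali$ is connected and $b$ lies in some interval. This yields a connected path-like subfamily; by discarding redundant intervals (any interval contained in the union of its neighbors that is not needed for connectivity) one arranges that each integer of $[a,b]$ is covered by at most $2$ of the chosen intervals — this is the standard fact that a greedy interval cover has overlap multiplicity $\le 2$, since if a point were in three intervals $I_1, I_2, I_3$ (left to right by right endpoint), $I_2$ would be redundant. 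Keeping at least one covering interval everywhere gives the "exactly $1$ or $2$" statement; call the result $\cali'$.

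Next, the snapping map $\iota$. Let $\endpoints(\cali)$ (I would actually only need $\endpoints(\cali')$, or can define $\iota$ on all of $\endpoints(\cali)$ by the same formula) be the set of endpoints of intervals, and define $\iota(x)$ to be a nearest point of $O=\delta\Z$ to $x$, breaking ties consistently (say, round down). Property~1 is immediate: the nearest multiple of $\delta$ is within $\delta/2 \le 2\delta$; I keep the bound $2\delta$ since later I may perturb ties to enforce monotonicity. Property~2, monotonicity, follows because rounding to the nearest lattice point (with a fixed tie-rule) is monotone non-decreasing: if $x \le y$ then $\iota(x) \le \iota(y)$. The consequence about intervals is then purely formal: if $[a_1,b_1], [a_2,b_2] \in \cali'$ intersect, say $a_1 \le a_2 \le b_1$, then $\iota(a_1) \le \iota(a_2) \le \iota(b_1)$, so $[\iota(a_1),\iota(b_1)]$ and $[\iota(a_2),\iota(b_2)]$ share the point $\iota(a_2)$; hence adjacency is preserved and $\cali''$ is connected.

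The main obstacle is Property~3: bounding the overlap multiplicity of $\cali''$ by $2$, given it for $\cali'$. Snapping endpoints to the coarse grid $O$ can a priori inflate overlaps — two disjoint intervals of $\cali'$ separated by a gap smaller than $\delta$ could be mapped to overlapping (or touching) intervals of $\cali''$, and worse, a short interval could collapse. This is exactly where the hypothesis $\delta \le \lfloor \Delta/2 \rfloor$ enters: since every interval of $\cali$ has length $\ge \Delta \ge 2\delta$, after snapping each interval of $\cali''$ still has length $\ge \Delta - \delta \ge \delta > 0$ (using Property~1 with the sharper $\delta/2$ bound, length $\ge \Delta - \delta$), so no interval degenerates; and I would argue that if a grid point $p$ lay in three intervals of $\cali''$, pulling back via the monotone $\iota$ would force a point of $[a,b]$ within distance $\le \delta \le \Delta/2$ of being in three intervals of $\cali'$ — more precisely, their $\iota$-preimage endpoints would be squeezed into an interval of length $< \delta$, contradicting either the overlap-$2$ property of $\cali'$ or, for short intervals, the length lower bound $\Delta \ge 2\delta$. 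I expect the cleanest route is: order $\cali'' = \{J_1, \dots, J_r\}$ by left endpoint (inherited from the order on $\cali'$ by monotonicity of $\iota$), show consecutive $J_i, J_{i+1}$ overlap in a grid-interval of controlled length and that $J_i \cap J_{i+2} = \emptyset$ because the corresponding intervals of $\cali'$, having overlap multiplicity $\le 2$, satisfy $\sup I_i < \inf I_{i+2}$ with a gap that survives snapping thanks to $\Delta \ge 2\delta$; then multiplicity $\le 2$ follows as before. I would carry out this last comparison carefully, as it is the only place where the three hypotheses ($\Delta \ge 2\delta$, overlap-$2$ of $\cali'$, monotonicity of $\iota$) must be combined simultaneously.
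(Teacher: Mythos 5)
Your construction of $\cali'$ is correct and essentially the paper's (you use a greedy chain, the paper a minimal bridging path; both give a connected chain in which non-consecutive intervals are disjoint, hence overlap multiplicity exactly $1$ or $2$), and properties 1 and 2 do hold for plain nearest-point rounding. The gap is exactly where you located it, in property 3, and the argument you sketch cannot be completed because its key claim is false: three intervals of $\cali''$ can share a point without any endpoints of $\cali'$ being ``squeezed into an interval of length $<\delta$.'' Concretely, take $[a,b]=[0,20]$ and $\cali'=\{[0,8],[5,11],[9,15],[12,20]\}$, so $\Delta=6$ and $\delta=3$ is admissible. This is a connected chain with overlap multiplicity at most $2$ (the disjointness of the first and third, and of the second and fourth, comes from $8<9$ and $11<12$). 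Nearest-point rounding to $3\Z$ sends $8\mapsto 9$, $5\mapsto 6$, $11\mapsto 12$, $9\mapsto 9$, $12\mapsto 12$, $20\mapsto 21$, with no ties, so $\cali''=\{[0,9],[6,12],[9,15],[12,21]\}$, and the point $9$ lies in three of these (as does $12$). The reason your intended contradiction never materializes is that disjointness of $I_{k-1}$ and $I_{k+1}$ in $\cali'$ only forces their facing endpoints to differ by at least $1$, not by anything comparable to $\delta$; when those two integers (here $8$ and $9$) round to the same grid point, the images of $I_{k-1}$ and $I_{k+1}$ meet, and their common point also lies in the image of $I_k$. No rounding rule applied pointwise and independently to each endpoint can avoid this, since two endpoints that may be adjacent integers must nevertheless be sent to grid points at distance at least $\delta$ apart.

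What is needed --- and what the paper does, which is why its property 1 allows the generous error $2\delta$ rather than the $\delta/2$ of naive rounding --- is a correction step after the first rounding: whenever the right endpoint of $I_{k-1}$ and the left endpoint of $I_{k+1}$ would land on the same grid point $v$, one of the colliding endpoints must be pushed to $v+\delta$ or $v-\delta$, preserving monotonicity; this is feasible because the two endpoints of any single interval are at least $2\delta$ apart, so the collisions occur in small isolated clusters with room next to them. The invariant you must engineer is that the image of the right endpoint of $I_{k-1}$ is strictly smaller than the image of the left endpoint of $I_{k+1}$ for every $k$; once that holds, monotonicity yields property 3 by exactly the ordering argument you outline at the end. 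Be warned that the paper's own treatment of this correction is terse (it explicitly handles only the case of three endpoints colliding at one grid point, whereas the example above has two separate two-endpoint collisions), so you should carry out the perturbation and its verification in full rather than rely on the rounding being harmless.
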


\begin{proof}
Choose a path $I_0,\ldots, I_m$ (with $I_i\cap I_{i+1}\not=\emptyset$)
in the interval graph $\cali$ with the property that $a\in I_0$, $b\in I_m$ (this latter we refer to by saying that the path bridges $a$ and $b$), and make the choice so that $m$ is minimal. By assumption, every point $k\in[a,b]$ is contained in some $I_i$. Suppose now that for some $k\in[a,b]$ there exist three distinct intervals that contain $k$. It is easy to check that then one can choose two of these three such that their union contains the third one. But then this third one could be dropped from $I_0,\ldots, I_b$, and one would still be left with a connected graph (and a path that bridges $a$ and $b$ in it), contradicting the minimality of $m$. Hence $\cali':=\{I_0,\ldots,I_m\}$ satisfies the first assertion. See Figure \ref{intervals} for the pattern of the intervals and the naming introduced in the next paragraph.

Still using that the $\cali'$ we defined is a minimal path, one can check the following. Denote the endpoints of
$I_0$ by $x_0$ and $x_2$, with $x_0<x_2$. Denote the endpoints of $I_m$ by $x_{2m-1}$ and $x_{2m+1}$, where $x_{2m-1}< x_{2m+1}$.
Finally, for $0<k<m$, let the endpoints of
 $I_k$ be $x_{2k-1}$ and $x_{2k+2}$, where $x_{2k-1}<x_{2k+2}$. Then $x_{2k-1}\leq x_{2k}$, because $I_{k-1}$ intersects $I_k$, and the latter is closer to $b$ than the former. Similarly, for $k\geq 1$ we have $x_{2k}<x_{2k+1}$, because $I_{k-1}\cap I_{k+1}=\emptyset$ (by the assumption that every point of $[a,b]$ is in at most two of the intervals).

\begin{figure}[h]
\vspace{0.1in}
\begin{center}
\includegraphics[keepaspectratio,scale=1.4]{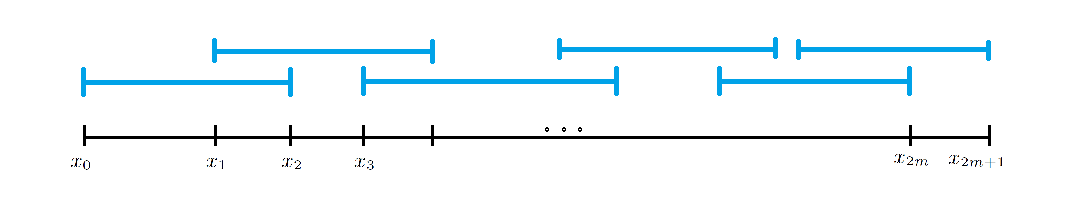}
\caption{Intervals representing a path in the interval graph. The lowest line represents the underlying set $[a,b]$.}\label{intervals}
\end{center}
\end{figure}

To construct $\iota$, do the following. 
Define a map $\iota'$ first, for $x\in\endpoints(\cali)$, by letting $\iota'(x)$ be the point of $O$ closest to $x$ (in case of a tie, decide arbitrarily). Vertex $x_{2k-1}$ and $x_{2k+2}$ are always at least $\Delta\geq 2\delta$ apart from each other, hence they cannot be mapped to the same point or to neighbors in $\delta\Z$. Therefore at most 3 points can be mapped to the same point by $\iota'$, and if
3 points are mapped to the same $v\in O$, then no point is mapped to $v+\delta$ or $v-\delta$.
Suppose that 3 vertices are mapped to some $v\in O$, that is, $\iota' (x_i)=\iota' (x_{i+1})=\iota' (x_{i+2})$. Then, if $x_{i+2}>\iota'(x_{i+2})$, define $\iota(x_{i+2})=\iota'(x_{i+2})+\delta$, and $\iota (x_i)=\iota (x_{i+1})=\iota' (x_{i+1})$. Otherwise we have $x_i<x_{i+1}<x_{i+2}\leq \iota'(x_{i+2})$. In this case define $\iota (x_i)=\iota'(x_i)-\delta$, and  $\iota (x_{i+1})=\iota (x_{i+2})=\iota' (x_{i+2})$. It is easy to check that $\iota$ satisfies the requirements.
\end{proof}
\qed

\begin{lemma}\label{distance}
Let $G$ and $B$ be random graphs, $B$ being a biinfinite path, and suppose that $(G,B)$ is jointly unimodular, $V(G)=V(B)$ and $E(B)\subset E(G)$. Suppose further that $G$ has only one end. Let $x_n$ and $x_{-n}$ be the two vertices whose distance from the root is $n$ in $B$.
Then
$$\lim_{n\to\infty}\E (\dist _G (x_{-n},x_n)/2n)\to 0.$$
\end{lemma}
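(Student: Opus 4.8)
The plan is to exploit amenability of $G$ together with the one-endedness to show that the two ``tails'' of the biinfinite path $B$ cannot run off in genuinely different directions at linear speed. First I would set $D_n := \dist_G(x_{-n},x_n)$ and note that by joint unimodularity of $(G,B)$ and stationarity along $B$, the quantity $\E(D_n)$ is subadditive-like in a suitable averaged sense, so it suffices to rule out $\E(D_n)/2n \to c > 0$. Suppose for contradiction that $\liminf \E(D_n)/2n = c>0$. The key geometric input is that in a one-ended graph, a geodesic (or near-geodesic) between $x_{-n}$ and $x_n$ in $G$ together with the path in $B$ from $x_{-n}$ to $x_n$ bounds a ``cycle'' that must enclose a large region; more precisely, removing a near-geodesic $\gamma_n$ from $x_{-n}$ to $x_n$ disconnects $G$, and the part on the ``finite side'' of $B \cup \gamma_n$ grows with $n$ if $\gamma_n$ stays far from the middle portion of $B$.

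The main step is an isoperimetric/counting argument. For each vertex $v$ on the path $B$ near the root, consider the ``$G$-distance profile'' between the two sides; using the MTP, transport mass from each vertex $v\in V(G)$ to the (essentially unique, by one-endedness) separating structure. Concretely: if $\E(D_n)$ grows linearly, then for a positive-density set of roots the geodesic $\gamma_n$ in $G$ has length $\geq c' n$, and one shows the set $S_n$ of vertices within $G$-distance $\sqrt{n}$ (say) of the middle segment of $B$ is separated from infinity by a set of size $o(|S_n|)$ — because one end forces the ``boundary at infinity'' to be reachable only by going around, and the around-route has length comparable to $D_n$. Amenability says no such sequence of finite sets with vanishing relative boundary can be ruled out — wait, rather: amenability supplies a Følner-type sequence, and one combines this with the MTP to derive that $\E(D_n)/2n$ must in fact tend to $0$, since otherwise the biinfinite path $B$ would furnish a nonamenable-like ``linear cut'' witnessing two ends of $G$ along the $B$-direction, contradicting that $G$ has exactly one end.

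Let me restructure the argument more carefully, as the cleanest route is via the one-endedness directly rather than through isoperimetry. Fix $n$ and let $m \gg n$. In $B$, the vertices $x_{-m}$ and $x_m$ are at distance $2m$. Consider the finite segment $[x_{-n},x_n]$ of $B$. Since $G$ has one end, $G \setminus \{x_{-n},\ldots,x_n\}$ — wait, this need not be disconnected. Instead: since $G$ has one end, any two points are joined by a path avoiding any fixed large ball's boundary in a controlled way. The crucial observation is that $x_{-n}$ and $x_n$ are joined in $G$ by a path of length $D_n$, AND they are joined in $B$ by a path of length $2n$ passing through the root. If $D_n < 2n - 2k$, then one can ``shortcut'' $B$: replace the $B$-segment between $x_{-n}$ and $x_n$ by the $G$-geodesic, producing a connected subgraph containing the root-ball of $B$ and having total diameter bounded — this is exactly the mechanism used to build the small connected subgraphs $H_n$ of Lemma~\ref{connectedsub}. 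So the real content is: the expected ``defect'' $2n - D_n$ is large (linear in $n$), which is what lets one carve out connected pieces of vanishing density. Thus I would prove the contrapositive is false by amenability: if $\E(D_n)/2n \not\to 0$, pick $\eps$ with $\E(D_n) \geq 2\eps n$ along a subsequence; then the $G$-geodesics between $x_{-n}$ and $x_n$, having length $\geq 2\eps n$ while the $B$-path has length $2n$, must ``spread out'' in $G$, and by the MTP the union of a geodesic with the $B$-segment encloses a region of $G$-vertices growing linearly in $n$ with boundary growing only like the geodesic length — but in a one-ended graph this enclosed region is finite, and amenability forces its relative boundary to vanish, which is incompatible with the region being ``pinched'' by a path; turning this incompatibility into a clean contradiction is the main obstacle, and I expect it requires the Følner sets guaranteed by amenability (Section 8 of \cite{AL}) to be intersected with the structure coming from $B$ via a mass-transport averaging argument.

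\textbf{Main obstacle.} The hard part will be making precise the claim that a biinfinite path $B$ inside a one-ended amenable $G$ must have its two ends ``$G$-close'' on average — i.e., converting the topological fact (one end) plus the measured fact (amenability, Følner) into the quantitative statement $\E(D_n) = o(n)$. The danger is that $D_n$ could behave like $n^{1-\delta}$ rather than being bounded, which is fine for the conclusion but means one cannot hope for a crude ``bounded enclosed region'' argument; one genuinely needs the averaged/unimodular version. I expect the right tool is: apply the MTP to the function counting, for each vertex $v$, whether $v$ lies ``between'' the two tails of $B$ at scale $n$ (on the finite side of $B_{[-n,n]} \cup \gamma_n$), show this count per unit length of $B$ is $\E(D_n)$-ish, then use amenability to bound it by the Følner boundary, and let $n \to \infty$.
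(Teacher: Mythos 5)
Your proposal does not reach a proof: by your own admission the ``main obstacle'' (converting one-endedness plus amenability into $\E(D_n)=o(n)$) is left open, and the two mechanisms you lean on are both problematic. First, you repeatedly invoke amenability and F\o lner sets, but Lemma~\ref{distance} does not assume amenability, and the paper's proof never uses it; the only inputs are one-endedness, joint unimodularity (MTP), and subadditivity. Second, your central geometric picture --- that a $G$-geodesic from $x_{-n}$ to $x_n$ together with the $B$-segment ``bounds a cycle that must enclose a large region'' whose boundary can then be compared to a F\o lner boundary --- is a planar intuition: in a general one-ended graph a cycle neither separates nor encloses anything, so the proposed isoperimetric step has no foundation. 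The correct consequence of one-endedness is different and you never state it: for every vertex $x$ of $B$ there must be infinitely many edges $e=\{k,\ell\}\in E(G)$ whose endpoints lie in different components of $B\setminus\{x\}$ (otherwise finitely many edges would separate the two tails of $B$, giving $G$ two ends). These ``chords'' bridging over every point of $B$ are the whole engine of the argument.

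Concretely, the paper fixes $c'$ slightly above the (subadditive) limit $c$, chooses a large scale $D$ and a cutoff $D'$ so that all but a $c'/16$-density subset of $[-N,N]$ is covered by chords of length in $[D,D')$, and then (via the combinatorial Lemma~\ref{todelta}) thins each covered block to a chain of chords in which every point is covered once or twice. Travelling along this chain costs only $O(N/D)$ edges, and the uncovered gaps between consecutive chord endpoints are handled by $G$-geodesics over intervals of a fixed length $\delta$, whose expected cost is $\E(\dist_G(0,\delta))\approx c'\delta$ each by unimodularity. Summing yields $\E(\dist_G(-N,N))\leq \frac{15}{16}c'\cdot N$ for large $N$, a self-improving bound that contradicts $c=\inf_N \E(\dist_G(-N,N))/2N>\frac{15}{16}c'$. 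Nothing in your sketch points toward this chord-covering and bootstrap structure, so the gap is not a matter of polishing but of the main idea being absent.
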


\begin{proof}
There are two graph isomorphisms from $B$ to $\Z$ that take the root to 0, pick one of the two randomly with probability 1/2 and fix is, for simpler reference. Through this isomorphism, we can refer to the points of $B$ as integers; we will use $B$ and $\Z$ interchangeably. This way, to every edge $e=\{k,\ell\}\in E(G)$, we can assign the interval $I(e)=[k,\ell]$, which can be thought of as the unique path in $B$ between the endpoints of $e$. We refer to $\ell-k$ as the length of the edge $e$.
Note that by subadditivity the limit exists,
$\lim \E ( \frac{\dist _G(-n,n)}{2n})=\inf \E ( \frac{\dist _G(-n,n)}{2n}).$
We need to prove that this number is 0. 

Suppose to the contrary, that 
$\lim \E ( \frac{\dist _G(0,n)}{n})=\lim \E ( \frac{\dist _G(-n,n)}{2n})=c>0$. Let $c<c'<8c/7$. Let $d$ be a positive integer such that 
$\E(\dist_G (o, n))<c'n
)$ for every $n\geq d$. Pick some $D> (64d+32)/3c'$($>2d$).


\def\bfs{{\bf S}_0}

Because of the one-endedness of $G$, for any point $x\in \Z$, there are infinitely many intervals $I(e)$, $e\in E(G)$, that contain $x$. In other words, there are infinitely many edges whose endpoints belong to different components of $B\setminus\{x\}$. Hence we can choose some number $D'$ with the property that $\P(o\in I(e) \text{ for some }I(e) \text{ with } D\leq |I(e)|<D')>1-c'/32$. By unimodularity, we have the same probability if we replace $o$ by some given $x\in \Z$. 
As we have just set,
$$
\E(|\{x\in [-N,N], \, x \in I(e) \text{ for some }I(e) \text{ with } D\leq |I(e)|<D'\}|)/(2N+1)>1-c'/32.
$$
Let $\cale _N$ be the collection of all edges $e=\{k,\ell\}\in E(G)$ of length at least $D$, such that $k,\ell\in [-N,N]$. Then,
$$
\E(|\{x\in [-N,N], x\in I \text{ for some } I\in \cale_N)
\}|)/(2N+1)\geq
$$
$$
\E(|\{x\in [-N+D',N-D'], \, x\in I(e) \text{ for some }I(e) \text{ with } D\leq |I(e)|<D'\}|)/(2N+1)\geq
$$
$$
\E(|\{x\in [-N,N], \, x\in I(e) \text{ for some }I(e) \text{ with } D\leq |I(e)|<D'\}|)/(2N+1)-2D'/(2N+1)\geq 1-c'/16.
$$
Let $\bfs$ be $\{x\in [-N,N]: \text{ for every } I\in \cale _N, x\not\in I
\}$.
The previous inequalities directly imply
\begin{equation}
\E(|\bfs|)\leq c'(2N+1)/16.
\label{kimarado}
\end{equation}

\def\comp{{\bf I}}

From now on, $\comp$ will denote an arbitrary connected component of $[-N,N]\setminus\bfs$. Let $\cale (\comp)$ be the subset of edges in $\cale _N$ both of whose endpoints are in $\comp$. Now, one can
apply Lemma \ref{todelta}, for $\comp=[a,b]$, with $\{I(e):\, e\in\cale (\comp)\}$ as $\cali$, $D=\Delta$, and $d=\delta$. Let $\cali'(\comp)=\cali'$ and $\iota$ be as in Lemma \ref{todelta}. One of the implications of the lemma is that for every $k\in[-N,N]$, there is exactly 1 or 2 elements of $\cali'$ that contain $k$. From this we have
\begin{equation}
|\cali'|\leq 2|\comp|/D,
\label{eq:Delta}
\end{equation}
because every interval in $\cali'\subset\cali$ has length at least $D$.

Let $P_2\subset \comp$ be the set of those points that are contained in exactly two elements of $\cali'$, and $P_1=\comp\setminus P_2$ be the set of those that are contained in one. 
Now, let $\cals _2$ be the set of maximal connected subintervals induced by $P_2$. Consider also the set of maximal connected subintervals induced by $P_1$, and partition it into two subsets, using the natural ordering on these intervals from left to right: let $\cals_1$ be the subset of these intervals that are at odd positions at this ordering, and $\cals_3$ be the set of those that are at even positions. See Figure \ref{intervals_2}. 

\begin{figure}[h]
\vspace{0.1in}
\begin{center}
\includegraphics[keepaspectratio,scale=1.4]{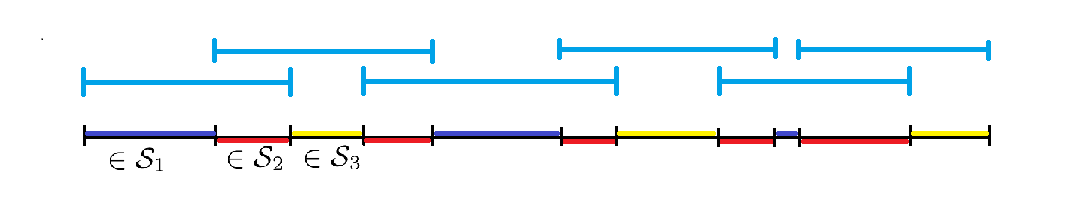}
\caption{The subinterval partition $\cali'$ of $\comp$, and the categorization of its elements to classes $\cals_1$, $\cals_2$, $\cals_3$.}\label{intervals_2}
\end{center}
\end{figure}

Denote $\cals=\cals(\comp):=\cals_1\cup\cals_2\cup\cals_3$.
We have $|\cals|\leq 2|\cali'|\leq 4|\comp|/D$ by \eqref{eq:Delta}.

If $I\subset [-N,N]$ is an arbitrary interval, let $I_-$ be its left endpoint and $I_+$ be its right endpoint. Fix $j\in\{1,2,3\}$ for now.
For every $I\in \cals_j (\comp)$, pick a path $P_I$ in $G$ of minimal length between $I_-$ and $I_+$. It is easy to check that the subgraph $\cup_{I\in \cals_j } P_I\cup \{e:\, I(e)\in \cali'\}$ of $G$ is connected, and it contains the endpoints $I_-$ and $I_+$ of $\comp$ (see Figure \ref{intervals_2}). Hence its total size is an upper bound on $\dist _G (I_-,I_+)$. We obtain that
\begin{equation}
\E (\dist _G (I_-,I_+))\leq |\cali'|+
\cup_{I\in \cals_j} |P_I| \leq
2|\comp|/D +
\E \bigl(\sum_{I\in\cals_j} \dist_G (I_-,I_+)\bigr)
\label{ezkell}
\end{equation}
using \eqref{eq:Delta}. As $I$ runs over all connected components of $[-N,N]\setminus\bfs$, one has 
\begin{equation}
\E\bigl(\sum_\comp \dist_G (\comp_-,\comp_+)\bigr)\leq \frac{1}{3}\sum_\comp\sum_{j=1}^3  \E\bigl( 2|\comp|/D+\sum_{I\in \cals_j (\comp)}\dist_G (I_-,I_+)\bigr)=
\end{equation}
\begin{equation}
(4N+2)/3D+\frac{1}{3}\Bigl(
 \E \bigl( \sum_\comp 2\delta |\cals (\comp)|+\sum_{I\in\cals (\comp)}\dist_G (\iota(I_-),\iota(I_+))\bigr)\Bigr)\leq 
\end{equation}
\begin{equation}
(4N+2)/3D+\frac{1}{3}\E \bigl(\sum_\comp 8\delta |\comp|/D\bigr)+ \frac{2}{3}\sum_{i=0}^{\lfloor N/\delta\rfloor} \E \bigl(\dist_G (i\delta, (i+1)\delta)\bigr)\leq
(4N+2)/3D+
16\delta N/3D+2 c'N/3,
\label{eq:mainbound}
\end{equation}
where the last inequality follows by unimodularity (via $\E(\dist_G (i\delta, (i+1)\delta))=\E(\dist_G (0, \delta))$) and the definition of $c'$, and the inequality before it uses Lemma \ref{todelta}.
We conclude that 
$$\E (\dist _G (-N,N))\leq 
\E(|\bfs|+\sum_\comp \dist_G (\comp_-,\comp_+))\leq 
c'(2N+1)/16+
(4N+2)/3D+
16\delta N/3D+2 c'N/3
\leq 15c'N/16.
$$
This holds for every large enough $N$, contradicting $c>15c'/16$.
\end{proof}
\qed

\begin{proofof}{Theorem \ref{oneended}}
Let $T_0$ be a fiid spanning tree of $G$ with one or two ends. Such a tree exists, as a straightforward generalization of Theorem 8.9 of \cite{AL} to the fiid setting. 

If $T_0$ has one end, then the claim is proved, so let us assume that it has 2 ends. Let $B$ be the biinfinite path in $T_0$. To every vertex $x$ in $B$, define $B_x$ as the subgraph induced in $T_0$ by $x$ and all vertices that are in a finite component of $T_0\setminus\{x\}$. For every vertex $v\in V(G)$ define $b (v)\in V(B)$ to be the (unique) vertex such that $v\in B_{b(v)}$.
We define a new unimodular graph $B^+$ on the vertex set of $B$, as a deteministic function of $(G,T_0)$.
For an edge $e=\{v,w\}$ in $G$, define $e^+=\{b(v),b(w)\}$, and let $E(B^+):=\{e^+:\, e\in E(G)\}$.
We will define an  fiid sequence $(K_n)$ of subgraphs of $B^+$ that satisfy the following:
\begin{enumerate}
\item $K_n$ is connected;
\item $\lim_{n\to\infty}\P (o\in K_n)=0$.
\end{enumerate} 
Once we have $(K_n)$, we will define a sequence $(H_n)$ of subgraphs of $G$, where $H_n:=\cup_{x\in V(K_N)} B_x \cup \{e\in E(G):\, e^+\in K_N\}$. It is easy to check that if $(K_n)$ satisfies conditions (1) and (2), then so does $(H_n)$, and thus the theorem follows from Lemma \ref{connectedsub}. It remains to construct the $K_n$.

Fix $n$ and consider Bernoulli($2^{-n})$ percolation on $V(B)$, independently from all other randomness that we have (the unimodular graph and the iid labels). For every pair of open vertices $x$ and $y$ such that every vertex of $B$ on the path between $x$ and $y$ is closed, choose a connected finite subgraph $C_{x,y}$ of minimal size of $B^+$ that contains both $x$ and $y$. Let $K_n$ be the union of all these $C_{x,y}$. Then the $K_n$ are connected. We will show that they also satisfy item 2.

As in the proof of Lemma \ref{distance}, choose a random uniform isomorphism between $B$ and $\Z$ that maps $o$ to the origin, for simpler reference. When convenient, we will refer to the vertices of $B$ as elements of $\Z$.
For an arbitrary $x\in V(B)$, let $x_+$ be the smallest $x_+>x$ that is open, and let $x_-$ be the largest $x_-\leq x$ that is open. 
Let $\eps>0$ be arbitrary. Choose $M$ such that $\E (\dist _{B^+} (o,m)/m)\leq \eps/2$ for every $m\geq M$, and choose $n_0$ so that $\P(o_+\leq M)<\eps/2$ whenever $n\geq n_0$. An $M$ with this first property exists by Lemma \ref{distance}.
Define ${\cal C}=\{C_{x,x_+}: x \text{ open}\}$. We have $K_n=\cup_{C\in{\cal C}}C$.
Define the following mass transport: let $o$ send mass $\frac{1}{o_+-o_-}$ to every vertex of $C_{o_-,o_+}$. The expected mass received is $\sum_{i=1}^\infty i\P(o \text{ is in exactly } i \text{ elements of } {\cal C})\geq \P (o \text{ is in some element of }{\cal C})=\P(o\in K_n)$. The expected mass sent out is $\E(|C_{o_-,o_+}|/|o_+-o_-|)\leq 
2\E(|\dist_{B^+}(o,o_+)|/|o_+-o|)=2\sum_{j=1}^\infty \P(o_+=j)\E(\dist _{B^+} (o,j)/j)$, using the independence of the percolation process. The first $M$ terms of this sum are less than $\eps/2$, while the sum 
$\sum_{j=M+1}^\infty \P(o_+=j)\E(\dist _{B^+} (o,j)/j)$ is also bounded by $\eps/2$. We obtain that $\P(o\in K_n)<\eps$, as we wanted. 
\end{proofof}
\qed


\end{document}